\theoremstyle{theorem}
\newtheorem{theorem}{Theorem}[section]
\newtheorem{lemma}[theorem]{Lemma}
\newtheorem{proposition}[theorem]{Proposition}
\newtheorem{corollary}[theorem]{Corollary}
\theoremstyle{definition}
\numberwithin{equation}{section}
\newcommand{\CC}{{\Bbb C}}
\newcommand{\DD}{{\Bbb D}}
\newcommand{\HH}{{\Bbb H}}
\newcommand{\RR}{{\Bbb R}}
\newcommand{\TT}{{\Bbb T}}
\newcommand{\cL}{{\mathcal L}}
\let\Re\undefined
\DeclareMathOperator{\Re}{\mathrm Re}
\DeclareMathOperator{\supp}{\mathrm supp}
\DeclareMathOperator{\conv}{\mathrm conv}
\begin{document}

\title[Titchmarsh convolution theorem]{Letter to the editor: \\A short complex-variable proof of the Titchmarsh convolution theorem}

\author{Thomas Ransford}
\address{D\'epartement de math\'ematiques et de statistique, Universit\'e Laval, 
Qu\'ebec City (Qu\'ebec),  Canada G1V 0A6}
\email{thomas.ransford@mat.ulaval.ca}
\thanks{Research supported by grants from NSERC and the Canada Research Chairs program}

\begin{abstract}
The Titchmarsh convolution theorem is a celebrated result
about the support of the convolution of two functions.
We present a simple proof based on the canonical factorization theorem for 
bounded holomorphic functions
on the unit disk.
\end{abstract}

\subjclass[2010]{Primary 42A85, Secondary 30JXX}

\keywords{Convolution, Laplace transform, singular inner function.}

\maketitle

\section{Introduction}

Let $f,g:\RR\to\CC$ be  integrable functions, 
and let $f*g$ be their convolution product, namely
\[
(f*g)(t):=\int_{-\infty}^\infty f(t-s)g(s)\,ds 
\quad(t\in\RR).
\]
It is clear that if $f=0$ a.e.\ on $(-\infty,a)$
and if $g=0$ a.e.\ on $(-\infty, b)$, then
$f*g=0$ a.e.\ on $(-\infty,a+b)$. Hence,
defining
\[
\alpha(f):=\sup\{a\in\RR: f=0 \text{~a.e. on~}(-\infty,a)\},
\]
and likewise for $g$ and $f*g$,
we have $\alpha(f*g)\ge \alpha(f)+\alpha(g)$.
Much less obvious is the following theorem.

\begin{theorem}\label{T:Titchmarsh}
If $\alpha(f)>-\infty$ and $\alpha(g)>-\infty$, then
\[
\alpha(f*g)=\alpha(f)+\alpha(g).
\]
\end{theorem}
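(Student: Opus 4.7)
The plan is to normalize by translating $f$ and $g$ so that $\alpha(f)=\alpha(g)=0$, and then, since $\alpha(f*g)\ge 0$ is automatic, to derive a contradiction from the assumption that $\alpha(f*g)=c$ for some $c>0$. The tools will be the Laplace transforms $\hat f(z):=\int_0^\infty f(t)e^{-zt}\,dt$ and $\hat g$, both bounded and holomorphic on the right half plane $\HH:=\{\Re z>0\}$, together with the Cayley map $\phi(w):=(1-w)/(1+w)$ from $\DD$ conformally onto $\HH$. Setting $F:=\hat f\circ\phi$ and $G:=\hat g\circ\phi$ places us in $H^\infty(\DD)$, the natural home of the canonical factorization theorem.

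The support assumption on $f*g$ yields $|\hat f(z)\hat g(z)|\le \|f*g\|_1\,e^{-c\Re z}$ on $\HH$. Since $\Re\phi(w)=(1-|w|^2)/|1+w|^2$ is exactly the Poisson kernel at the boundary point $-1\in\TT$, this translates into
\[
|F(w)G(w)|\le \|f*g\|_1\,|S(w)|^c,\qquad w\in\DD,
\]
where $S(w):=\exp\!\bigl(-(1-w)/(1+w)\bigr)$ is the atomic singular inner function with unit mass at $-1$. Hence $FG/S^c\in H^\infty(\DD)$, and the canonical factorization theorem forces $S^c$ to divide the inner factor of $FG$. Since the inner factor of a product is the product of the inner factors, the singular measures of the inner parts of $F$ and $G$ must satisfy $\mu_F(\{-1\})+\mu_G(\{-1\})\ge c$, so one of them, say $c_1:=\mu_F(\{-1\})$, is strictly positive. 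Equivalently, $F/S^{c_1}\in H^\infty(\DD)$, which translates back through $\phi$ into the bound $|\hat f(z)|\le M\,e^{-c_1\Re z}$ on $\HH$.

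The remaining and most delicate step is to convert this exponential decay of $\hat f$ into the support statement $f=0$ a.e.\ on $[0,c_1]$, which will contradict $\alpha(f)=0$. My plan is a Liouville argument: set $h(t):=f(t+c_1)\in L^1(\RR)$, so that $\supp h\subseteq[-c_1,\infty)$, and observe that its two-sided Laplace transform $\tilde h(z):=\int h(t)e^{-zt}\,dt$ equals $e^{c_1z}\hat f(z)$ on $\HH$ and is therefore bounded there. Splitting $h=h_++h_-$ at the origin, $\tilde h_+$ lies in $H^\infty(\HH)$, while $\tilde h_-$ is the integral of a compactly supported function against the entire kernel $e^{-zt}$, hence entire, and is obviously bounded on $\{\Re z\le 0\}$. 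The boundedness of $\tilde h$ on $\HH$ then makes $\tilde h_-$ a bounded entire function, and dominated convergence along the negative real axis shows $\tilde h_-(x)\to 0$ as $x\to-\infty$; Liouville gives $\tilde h_-\equiv 0$, and uniqueness of the Fourier transform yields $h_-=0$, completing the contradiction. I expect this Paley--Wiener-type step to be the main technical hurdle; the factorization argument itself is quite clean once the right singular inner function has been identified.
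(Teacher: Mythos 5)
Your proof is correct and follows essentially the same route as the paper: both arguments rest on the canonical factorization theorem, the identification of $e^{-c\phi}$ as the atomic singular inner function at the boundary point of $\DD$ corresponding to infinity, the additivity of the singular measure's atom there under multiplication, and a Liouville argument to pass between boundedness of a Laplace transform and the support of the function. The only difference is organizational: you normalize $\alpha(f)=\alpha(g)=0$ and show directly that exponential decay of $\hat f\hat g$ forces exponential decay of one factor, whereas the paper normalizes $\alpha(f),\alpha(g)<0<\alpha(f*g)$ and proves the contrapositive statement that the product of two unbounded transforms of this type is unbounded.
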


Theorem~\ref{T:Titchmarsh} is a version of the Titchmarsh convolution theorem \cite{Ti26}.
This celebrated result  has applications in a number of fields, including
harmonic analysis, Banach algebras,  operator theory and partial differential equations.

There are several proofs of the Titchmarsh convolution theorem, none of them easy.
Broadly speaking, they divide into two groups:
those using real-variable methods 
(see e.g.\ \cite{Dos88, Kal62, Mi53, Mi59}),
and those based on complex-variable techniques 
(Titchmarsh's original proof was of this kind, 
see also \cite{Bo54, Cr41, Du47, Ko57, La59}).
The complex-variable proofs are maybe more natural, 
but tend to require more advanced results from function theory.
In this paper we present a simple proof that uses nothing beyond
the well-known canonical factorization theorem for bounded  holomorphic functions
on the unit disk.

\section{Laplace transforms}

Complex-variable proofs of Titchmarsh's theorem usually
proceed via the Laplace transform,
so we begin by briefly reviewing this notion.
In what follows, we write $\HH$ for the right half-plane,
namely $\HH:=\{z\in\CC:\Re z>0\}$.

Let $f:\RR\to\CC$ be an integrable function with $\alpha(f)>-\infty$.
Its Laplace transform
$\cL f:\HH\to\CC$ is defined by
\[
\cL f(z):=\int_{-\infty}^\infty f(t) e^{-zt}\,dt \qquad(z\in \HH).
\]
It is easy to see that $\cL f$ is holomorphic on $\HH$. 
Furthermore, a straightforward estimate shows that
\[
|\cL f(z)|\le Ae^{-\alpha(f)\Re z} \qquad(z\in \HH),
\]
where  $A:=\int_{-\infty}^\infty|f(t)|\,dt$.
In particular, if $\alpha(f)\ge0$, then $\cL f$ is bounded on~$\HH$. 
The converse is also true:

\begin{lemma}\label{L:Liouville}
If $\cL f$ is bounded on $\HH$, then $\alpha(f)\ge0$.
\end{lemma}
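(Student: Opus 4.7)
The plan is to split the Laplace transform at $t=0$ and apply Liouville's theorem to the contribution from the negative axis. Write $\cL f = h+k$ with
\[
h(z):=\int_{-\infty}^0 f(t)e^{-zt}\,dt \quad\text{and}\quad k(z):=\int_0^\infty f(t)e^{-zt}\,dt.
\]
Since $k$ is trivially bounded on $\HH$ by $\|f\|_1$, the hypothesis forces $h$ to be bounded on $\HH$ as well. We may assume $\alpha(f)\le 0$, otherwise there is nothing to prove; then $h(z)=\int_{\alpha(f)}^0 f(t)e^{-zt}\,dt$ is really an integral over a bounded interval, and so extends to an entire function of $z$.

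The crucial observation is the left-half-plane bound: for $\Re z\le 0$ and $t\le 0$ one has $|e^{-zt}|=e^{-t\Re z}\le 1$, giving $|h(z)|\le\|f\|_1$ throughout the closed left half-plane. Combined with the bound on $\HH$, this makes $h$ a bounded entire function, so Liouville's theorem forces $h$ to be constant. Letting $\Re z\to-\infty$, dominated convergence (the integrand tends pointwise to $0$ on $(\alpha(f),0)$ and is dominated by $|f|$) shows $h(z)\to 0$, and hence $h\equiv 0$.

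Finally, setting $z=iy$ with $y\in\RR$ realises $h(iy)$ as the Fourier transform of the restriction of $f$ to $(-\infty,0)$; Fourier uniqueness then yields $f=0$ a.e.\ on $(-\infty,0)$, i.e.\ $\alpha(f)\ge 0$. The main subtlety is ensuring $h$ is entire, which relies on the hypothesis $\alpha(f)>-\infty$ to make the integration domain bounded, coupled with the uniform $\|f\|_1$ bound on the closed left half-plane; after that, Liouville and Fourier uniqueness do the work, matching the suggestive name of the lemma.
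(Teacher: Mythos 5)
Your proof is correct and follows essentially the same route as the paper: split $f$ at $t=0$, observe that the Laplace transform of the left piece is entire (thanks to $\alpha(f)>-\infty$) and bounded on both half-planes, apply Liouville's theorem, identify the constant as $0$ via the limit $\Re z\to-\infty$, and conclude by uniqueness (the paper cites Laplace-transform uniqueness where you invoke Fourier uniqueness on the imaginary axis, a negligible difference). No gaps.
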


\begin{proof}
Set $f_1:=f1_{(-\infty,0]}$ and $f_2:=f1_{(0,\infty)}$. 
As $f_1$ is supported on a compact subset of $(-\infty,0]$, 
its Laplace transform $\cL f_1$ is an entire function 
which is bounded on the left half-plane $\Re z\le 0$.
On the right half-plane $\Re z>0$, we have $\cL f_1=\cL f-\cL f_2$, 
where $\cL f_2$ is clearly bounded, 
and $\cL f$ is bounded by assumption.
So in fact $\cL f_1$ is bounded on the whole complex plane. 
By Liouville's theorem $\cL f_1$ is constant. 
It is easily seen that $\lim_{x\to-\infty}\cL f_1(x)=0$, 
so the constant must be zero, i.e.\ $\cL f_1\equiv0$.
By the uniqueness theorem for Laplace transforms, $f_1=0$ a.e.
Hence $f=f_2$ a.e., and so, finally, $\alpha(f)=\alpha(f_2)\ge0$.
\end{proof}

Lastly in this section, 
we remark that, if $f,g$ are integrable functions 
such that $\alpha(f),\alpha(g)>-\infty$, 
then $\cL(f*g)=\cL f.\cL g$, the pointwise product on $\HH$.
The proof is a routine argument using Fubini's theorem.

\section{Beginning of the proof of Theorem~\ref{T:Titchmarsh}}

We argue by contradiction.
Suppose, if possible, that there are 
integrable functions $f,g$ with 
$\alpha(f)>-\infty$ and $\alpha(g)>-\infty$
such that $\alpha(f*g)>\alpha(f)+\alpha(g)$.
Replacing $f$ and $g$ by appropriate translates, 
we may further assume that $\alpha(f)<0$ and $\alpha(g)<0$
and $\alpha(f*g)>0$.
By Lemma~\ref{L:Liouville}, the Laplace transforms $\cL f, \cL g$
are unbounded on $\HH$, whereas their product $\cL f.\cL g$ is bounded on~$\HH$.
So, to obtain a contradiction, 
and thereby complete the proof of Theorem~\ref{T:Titchmarsh}, 
it suffices to establish the following general result  
about holomorphic functions.

\begin{proposition}\label{P:mainresult}
Let $F$ and $G$ be holomorphic functions on $\HH$ satisfying
$|F(z)|\le Ae^{a\Re z}$ and $|G(z)|\le Be^{b\Re z}$
for some positive constants $A,a,B,b$.
If both $F$ and $G$ are \mbox{unbounded} on $\HH$, then so is their product $FG$.
\end{proposition}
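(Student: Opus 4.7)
The plan is to transfer the problem to the unit disk $\DD$ via the conformal map $\varphi(w):=(1+w)/(1-w)$, which sends $\DD$ onto $\HH$ with the single boundary point $w=1$ mapping to $\infty$. Under this map, the exponential growth of $F$ and $G$ on $\HH$ is concentrated entirely at $w=1$, precisely in a form that can be absorbed by a singular inner function. Set $\psi(w):=\exp\bigl(-(1+w)/(1-w)\bigr)$, so that $|\psi(w)|=e^{-\Re\varphi(w)}$ on $\DD$; this is the singular inner function with associated measure $\delta_1$ on $\TT$.

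Define
\[
F_1(w):=\psi(w)^{a}\,F(\varphi(w)), \qquad G_1(w):=\psi(w)^{b}\,G(\varphi(w)).
\]
The growth hypotheses yield $|F_1|\le A$ and $|G_1|\le B$ on $\DD$, so $F_1,G_1\in H^\infty(\DD)$. Moreover $F_1G_1=\psi^{a+b}\cdot(FG\circ\varphi)$. If the proposition were to fail, then $FG\circ\varphi$ would also be bounded on $\DD$, whereas $F\circ\varphi=\psi^{-a}F_1$ and $G\circ\varphi=\psi^{-b}G_1$ would be unbounded.

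The central step is to read boundedness on $\HH$ off the canonical factorization. Applying inner--outer factorization to $F_1$, let $\mu_F\ge0$ be the singular measure of its singular inner factor. I will establish the criterion
\[
F\circ\varphi\in H^\infty(\DD) \iff \mu_F(\{1\})\ge a,
\]
and similarly for $G$. The $(\Rightarrow)$ direction is immediate: if $F\circ\varphi=h\in H^\infty$, then $F_1=\psi^{a}h$, and the inner part of $F_1$ inherits the singular factor $\psi^{a}$, which deposits mass $a$ at the point $1$. For $(\Leftarrow)$, writing $\mu_F=c_F\delta_1+\nu_F$ with $\nu_F$ carrying no mass at $1$, the singular inner factor of $F_1$ splits as $\psi^{c_F}\cdot S_{\nu_F}$, whence
\[
F\circ\varphi=\psi^{-a}F_1=\psi^{c_F-a}\cdot B_{F_1}\cdot S_{\nu_F}\cdot O_{F_1},
\]
and every factor on the right lies in $H^\infty(\DD)$ when $c_F\ge a$ (using $|O_{F_1}|\le\|F_1\|_\infty$ on the boundary).

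To finish, apply the same criterion to the product. Since inner--outer factorization is multiplicative (products of Blaschke, singular inner, and outer functions remain of the same type), the singular measure of $F_1G_1$ is $\mu_F+\mu_G$. The assumption that $FG\circ\varphi$ is bounded forces $(\mu_F+\mu_G)(\{1\})\ge a+b$. But $F$ and $G$ unbounded give $\mu_F(\{1\})<a$ and $\mu_G(\{1\})<b$, contradicting $\mu_F(\{1\})+\mu_G(\{1\})\ge a+b$. The main technical obstacle is establishing the boundedness criterion cleanly, which requires cleanly separating the atom at $1$ from the rest of the singular measure and verifying that the remaining factors are genuinely in $H^\infty$; once that lemma is in hand, the additivity of singular measures under multiplication closes the argument in one line.
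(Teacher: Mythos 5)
Your proof is correct and follows essentially the same route as the paper: transfer to the disk via the Cayley map, absorb the exponential growth $e^{a\Re z}$ into the singular inner function $S_{a\delta_1}$, and use uniqueness of the canonical factorization to show that $F\circ\varphi$ is bounded if and only if the singular measure of $\psi^aF\circ\varphi$ has mass at least $a$ at the point $1$. Your two-sided boundedness criterion is exactly the paper's Corollary~\ref{C:factorization} specialized to $\sigma_2=a\delta_1$, and the concluding additivity-of-atoms contradiction is identical.
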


Up till now, we have followed a fairly standard route. 
The novelty in our approach lies in our proof Proposition~\ref{P:mainresult},
for which we shall use the canonical factorization theorem for bounded holomorphic functions
on the unit disk. We pause briefly to review this theorem.

\section{Canonical factorization theorem}

Let $\DD$ denote the open unit disk and $\TT$ denote the unit circle.

A \emph{Blaschke product} on $\DD$ is a function of the form
\[
B(z):=c z^m\prod\frac{|a_n|}{a_n}\frac{a_n-z}{1-\overline{a}_nz}
\quad(z\in\DD),
\]
where $c$ is a unimodular constant, $m$ is a non-negative integer, and $(a_n)$ is a (finite or infinite)
sequence of points in $\DD$ such that $\sum_n(1-|a_n|)<\infty$.

A \emph{bounded outer function} on $\DD$ is a function of the form
\[
O(z):= \exp\Bigl(\int_\TT \frac{e^{i\theta}+z}{e^{i\theta}-z}\log \omega(\theta)\,\frac{d\theta}{2\pi}\Bigr)
\quad(z\in\DD),
\]
where $\omega:\TT\to(0,\infty)$ is a bounded positive function such that $\log \omega\in L^1(\TT)$.

A \emph{singular inner function} on $\DD$ is a function of the form
\[
S(z):=\exp\Bigl(-\int_\TT \frac{e^{i\theta}+z}{e^{i\theta}-z}\,d\sigma(e^{i\theta})\Bigr)
\quad(z\in\DD),
\]
where $\sigma$ is finite positive Borel measure on $\TT$ that is singular with respect to Lebesgue measure.
The measure $\sigma$ is uniquely determined by $S$. We shall often write $S=S_\sigma$ to indicate the dependence of $S$ upon $\sigma$.

The following result is known as the canonical factorization theorem.
A proof can be found for example in \cite[Theorem~2.8]{Du70}.

\begin{theorem}\label{T:factorization}
Every bounded holomorphic function $H$ on $\DD$ with  $H\not\equiv0$  has a unique factorization of the form $H=BOS$, where $B$ is a Blaschke product, $O$ is a bounded outer function, and $S$ is a singular inner function.
\end{theorem}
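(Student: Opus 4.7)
My plan is to construct the factorization by peeling off the three factors in the order $B$, $O$, $S$, using the fact that each is determined by progressively coarser data: the zero set, then the boundary modulus, then a singular boundary measure. Throughout, let $H$ be a bounded holomorphic function on $\DD$ with $H\not\equiv0$, normalized so that $\sup_\DD|H|\le1$.

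First I would extract the Blaschke product. Applying Jensen's formula to $H$ on circles $|z|=r$ as $r\uparrow1$ yields the Blaschke condition $\sum_n(1-|a_n|)<\infty$ on the zero sequence $(a_n)$ of $H$ in $\DD$, counted with multiplicity (and separately accounting for a zero at the origin if needed). The associated Blaschke product $B$ is then well defined, bounded by $1$ on $\DD$, and shares its zero set with $H$. Setting $H_1:=H/B$, removable singularities at each $a_n$ let $H_1$ extend holomorphically; the maximum principle applied to the finite partial quotients $H/B_N$ gives $\sup_\DD|H_1|\le\sup_\DD|H|$ in the limit; and by construction $H_1$ is zero-free on $\DD$.

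Next I would extract the outer and singular inner factors. Since $H_1$ is bounded and non-vanishing on the simply connected domain $\DD$, $\log|H_1|$ is harmonic, and Fatou's theorem provides non-tangential boundary values $H_1^*$ a.e.\ on $\TT$. Jensen's formula applied to $H_1$ (which satisfies $H_1(0)\neq0$) yields $\int_\TT\log|H_1^*|\,d\theta/(2\pi)=\log|H_1(0)|>-\infty$; combined with $\log|H_1^*|\le 0$, this gives $\log|H_1^*|\in L^1(\TT)$. Define
\[
O(z):=\exp\Bigl(\int_\TT\frac{e^{i\theta}+z}{e^{i\theta}-z}\log|H_1^*(e^{i\theta})|\,\frac{d\theta}{2\pi}\Bigr),
\]
a bounded outer function whose non-tangential boundary modulus equals $|H_1^*|$ a.e. Then $S:=H_1/O$ is holomorphic and non-vanishing on $\DD$ with unimodular boundary values a.e., so $-\log|S|$ is a non-negative harmonic function on $\DD$ whose boundary values vanish a.e. By the Herglotz representation theorem it is the Poisson integral of a finite positive Borel measure $\sigma$ on $\TT$; the vanishing boundary values force $\sigma$ to be singular with respect to Lebesgue measure. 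Hence $S=cS_\sigma$ for some unimodular constant $c$, which can be absorbed into the unimodular factor of $B$, yielding $H=BOS$.

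Uniqueness is then straightforward: the zero set of $H$ determines $B$, the boundary modulus $|H^*|=|O^*|$ determines $O$ via the Poisson formula, $S=H/(BO)$ is forced, and $\sigma$ is uniquely recovered from $-\log|S|$ by the uniqueness of the Herglotz representation. I expect the main obstacle to be the underlying analytic machinery --- Jensen's formula, Fatou's theorem, and the Herglotz representation --- rather than the assembly itself; the most delicate point is verifying that $H_1=H/B$ remains bounded on $\DD$ and that $S$ truly has unit boundary modulus, both of which hinge on Blaschke products being inner, i.e.\ $|B^*|=1$ a.e.\ on $\TT$.
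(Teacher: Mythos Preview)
The paper does not actually prove this theorem; it simply states it as background and cites Duren's book for a proof. Your sketch is the standard argument one finds in such references (extract the Blaschke product from the zero set, the outer factor from the boundary modulus, and identify what remains as singular inner via the Herglotz representation), and it is essentially correct.

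One inaccuracy worth fixing: Jensen's formula applied to the zero-free function $H_1$ gives
\[
\log|H_1(0)|=\frac{1}{2\pi}\int_0^{2\pi}\log|H_1(re^{i\theta})|\,d\theta
\]
for each $r<1$, not the boundary integral $\int_\TT\log|H_1^*|\,d\theta/(2\pi)$. Equality between $\log|H_1(0)|$ and the boundary integral in fact \emph{fails} precisely when a nontrivial singular inner factor is present (take $H_1=S_\sigma$: then $\log|H_1^*|=0$ a.e.\ but $\log|H_1(0)|=-\sigma(\TT)<0$). What you actually need, and what does follow from Fatou's lemma together with $\log|H_1|\le0$, is the one-sided bound
\[
\frac{1}{2\pi}\int_\TT\log|H_1^*|\,d\theta\ \ge\ \log|H_1(0)|\ >\ -\infty,
\]
and this suffices to place $\log|H_1^*|$ in $L^1(\TT)$ and define $O$. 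The rest of your argument then goes through as written.
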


The following corollary will prove crucial in what follows.

\begin{corollary}\label{C:factorization}
Let $B$ be a Blaschke product, let $O$ be a bounded outer function, and let $S_{\sigma_1}$ and $S_{\sigma_2}$ be singular inner functions. Then $BOS_{\sigma_1}/S_{\sigma_2}$ is bounded on $\DD$ if and only if $\sigma_1-\sigma_2$ is a positive measure.
\end{corollary}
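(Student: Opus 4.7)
My plan is to use Theorem~\ref{T:factorization}, together with the immediate identity $S_\mu S_\nu = S_{\mu+\nu}$, valid for any two positive singular measures $\mu,\nu$ on $\TT$ directly from the integral formula defining singular inner functions.

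For the sufficient direction, I will set $\mu := \sigma_1-\sigma_2$ and note that, under the assumption $\mu\ge 0$, one has $\mu\le\sigma_1$; since $\sigma_1$ is singular with respect to Lebesgue measure, so is $\mu$, and $S_\mu$ is a genuine singular inner function. The integral formula then gives $S_{\sigma_1}/S_{\sigma_2}=S_\mu$, which is bounded by $1$ on $\DD$, and multiplying by $B$ (also bounded by $1$) and by the bounded outer function $O$ yields boundedness of $BOS_{\sigma_1}/S_{\sigma_2}$.

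For the reverse direction, I will first observe that $H:=BOS_{\sigma_1}/S_{\sigma_2}$ is holomorphic on $\DD$ (because $S_{\sigma_2}$ has no zeros there) and $H\not\equiv 0$ (because none of $B$, $O$, $S_{\sigma_1}$ is identically zero). Assuming $H$ is bounded, I will apply Theorem~\ref{T:factorization} to write $H = B_1 O_1 S_\tau$ for some Blaschke product $B_1$, bounded outer function $O_1$, and singular inner function $S_\tau$, and then multiply through by $S_{\sigma_2}$ to obtain
\[
BOS_{\sigma_1}=B_1 O_1 S_\tau S_{\sigma_2}=B_1 O_1 S_{\tau+\sigma_2}.
\]
Both sides are canonical factorizations of the same bounded holomorphic function, so the uniqueness clause of Theorem~\ref{T:factorization}, together with the uniqueness of the measure associated to a singular inner function, will force $\tau+\sigma_2=\sigma_1$, whence $\sigma_1-\sigma_2=\tau\ge 0$.

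I do not anticipate any genuine obstacle: once the multiplicative identity $S_\mu S_\nu=S_{\mu+\nu}$ is noted, both directions reduce to bookkeeping powered by the uniqueness clause of the canonical factorization theorem.
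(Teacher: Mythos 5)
Your proof is correct and follows essentially the same route as the paper: the forward direction via the identity $S_{\sigma_1}/S_{\sigma_2}=S_{\sigma_1-\sigma_2}$, and the converse by factoring the bounded quotient as $B_1O_1S_\tau$, multiplying through by $S_{\sigma_2}$, and invoking the uniqueness clause of Theorem~\ref{T:factorization}. Your added checks (that $\sigma_1-\sigma_2$ is singular, and that the quotient is holomorphic and nonzero) are small refinements the paper leaves implicit.
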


\begin{proof}
The `if' is obvious, since $BOS_{\sigma_1}/S_{\sigma_2}=BOS_{\sigma_1-\sigma_2}$. 
For the `only if', suppose that
$BOS_{\sigma_1}/S_{\sigma_2}$ is bounded. Then, by Theorem~\ref{T:factorization}, 
we can write it as $\widetilde{B}\widetilde{O}\widetilde{S}$,
where $\widetilde{B}$ is a Blaschke product, $\widetilde{O}$ is a bounded outer function,
and $\tilde{S}$ is a singular inner function, say $\widetilde{S}=S_\sigma$.
Multiplying up by $S_{\sigma_2}$, we then have
$BOS_{\sigma_1}=\widetilde{B}\widetilde{O}S_{\sigma_2+\sigma}$. 
By the uniqueness part of Theorem~\ref{T:factorization},
it follows that $\sigma_1=\sigma_2+\sigma$. Thus $\sigma_1-\sigma_2$ is a positive measure.
\end{proof}

\section{Completion of the proof of Theorem~\ref{T:Titchmarsh}}

\begin{proof}[Proof of Proposition~\ref{P:mainresult}]
By assumption, the function $F(z)e^{-az}$ is bounded on $\HH$.
Composing with $\phi(z):= (1+z)/(1-z)$, which maps $\DD$ conformally onto $\HH$, 
we obtain a bounded holomorphic function on $\DD$. Hence, by Theorem~\ref{T:factorization},
\[
F(\phi(z))e^{-a\phi(z)}=B(z)O(z)S_\sigma(z)
\quad(z \in\DD),
\]
where $B$ is a Blaschke product, $O$ is a bounded outer function,
and $S_\sigma$ is a singular inner function. Notice also that $e^{-a\phi(z)}=S_{a\delta_1}(z)$,
where $\delta_1$ denotes the Dirac mass at the point $1$.
Thus we have
\[
F\circ\phi =BOS_\sigma/S_{a\delta_1}.
\]
If $F$ is unbounded on $\HH$, then $F\circ\phi$ is unbounded on $\DD$,
so by Corollary~\ref{C:factorization} $\sigma-a\delta_1$ is not a positive measure,
in other words $\sigma(\{1\})<a$. 

Likewise, we can write
\[
G\circ\phi=\widetilde{B}\widetilde{O}S_\tau/S_{b\delta_1},
\]
where $\widetilde{B}$ is a Blaschke product, $\widetilde{O}$ is a bounded outer function,
$S_\tau$ is a singular inner function,
and, if $G$ is unbounded, then $\tau(\{1\})<b$. 

Multiplying these equations together, we obtain
\[
(FG)\circ\phi=(B\widetilde{B})(O\tilde{O})S_{\sigma+\tau}/S_{(a+b)\delta_1}.
\]
If $F$ and $G$ are both unbounded, then $(\sigma+\tau)(\{1\})<a+b$, 
so the measure $(\sigma+\tau)-(a+b)\delta_1$ is not positive.
Corollary~\ref{C:factorization} then shows that $(FG)\circ\phi$ is unbounded
on $\DD$, whence $FG$ is unbounded on $\HH$. This completes the proof
of Proposition~\ref{P:mainresult}, and therefore also that of Theorem~\ref{T:Titchmarsh}.
\end{proof}

\section{Concluding remarks}
The Titchmarsh convolution theorem can be expressed in terms of supports. The 
\emph{support} an integrable function $f:\RR\to\CC$ is 
the smallest closed subset of $\RR$ such that $f=0$ a.e.\ on the complement. 
Writing $\supp(f)$ for this support, it is easy to see that, for all integrable functions $f,g$, we have
\begin{equation}\label{E:supp}
\supp(f*g)\subset\supp(f)+\supp(g).
\end{equation}
If further $\supp(f)$ and $\supp(g)$ are both compact,
then Theorem~\ref{T:Titchmarsh} tells us that the two sides
of \eqref{E:supp} have the same minimum and (after  reflection) the same maximum. Thus,
writing $\conv(\cdot)$ for the convex hull, we have
\begin{equation}\label{E:conv}
\conv(\supp(f*g))=\conv(\supp(f))+\conv(\supp(g)).
\end{equation}

In this form, the result generalizes to  higher dimensions.
If $f,g:\RR^n\to\CC$ are integrable functions with compact
supports, then \eqref{E:conv} holds. 
More generally still, \eqref{E:conv} holds whenever $f$
and $g$ are distributions on $\RR^n$ with compact supports. This generalization is due to Lions \cite{Li53}.
It can be deduced quite easily from the basic version, Theorem~\ref{T:Titchmarsh} (see e.g.\ \cite[\S45]{Don69}).

\bibliographystyle{amsplain}
\bibliography{biblist}

\providecommand{\bysame}{\leavevmode\hbox to3em{\hrulefill}\thinspace}
\providecommand{\MR}{\relax\ifhmode\unskip\space\fi MR }
\providecommand{\MRhref}[2]{%
  \href{http://www.ams.org/mathscinet-getitem?mr=#1}{#2}
}
\providecommand{\href}[2]{#2}
\begin{thebibliography}{10}

\bibitem{Bo54}
R.~P. Boas, Jr., \emph{Entire functions}, Academic Press Inc., New York, 1954.
  \MR{0068627}

\bibitem{Cr41}
M.~M. Crum, \emph{On the resultant of two functions}, Quart. J. Math., Oxford
  Ser. \textbf{12} (1941), 108--111. \MR{0004650}

\bibitem{Don69}
W.~F. Donoghue, Jr., \emph{Distributions and {F}ourier transforms}, Pure and
  Applied Mathematics, vol.~32, Academic Press, New York, 1969. \MR{3363413}

\bibitem{Dos88}
R.~Doss, \emph{An elementary proof of {T}itchmarsh's convolution theorem},
  Proc. Amer. Math. Soc. \textbf{104} (1988), no.~1, 181--184. \MR{958063}

\bibitem{Du47}
J.~Dufresnoy, \emph{Sur le produit de composition de deux fonctions}, C. R.
  Acad. Sci. Paris \textbf{225} (1947), 857--859. \MR{0022631}

\bibitem{Du70}
P.~L. Duren, \emph{Theory of {$H^{p}$} spaces}, Pure and Applied Mathematics,
  Vol. 38, Academic Press, New York-London, 1970. \MR{0268655}

\bibitem{Kal62}
G.~K. Kalisch, \emph{A functional analysis proof of {T}itchmarsh's theorem on
  convolution}, J. Math. Anal. Appl. \textbf{5} (1962), 176--183. \MR{0140893}

\bibitem{Ko57}
P.~Koosis, \emph{On functions which are mean periodic on a half-line}, Comm.
  Pure Appl. Math. \textbf{10} (1957), 133--149. \MR{0089297}

\bibitem{La59}
P.~D. Lax, \emph{Translation invariant spaces}, Acta Math. \textbf{101} (1959),
  163--178. \MR{0105620}

\bibitem{Li53}
J.~L. Lions, \emph{Supports dans la transformation de {L}aplace}, J. Analyse
  Math. \textbf{2} (1953), 369--380. \MR{0058013}

\bibitem{Mi53}
J.~G. Mikusi\'nski, \emph{A new proof of {T}itchmarsh's theorem on
  convolution}, Studia Math. \textbf{13} (1953), 56--58. \MR{0058668}

\bibitem{Mi59}
\bysame, \emph{Operational calculus}, International Series of Monographs on
  Pure and Applied Mathematics, Vol. 8, Pergamon Press, New
  York-London-Paris-Los Angeles; Pa\'nstwowe Wydawnictwo Naukowe, Warsaw, 1959.
  \MR{0105594}

\bibitem{Ti26}
E.~C. Titchmarsh, \emph{The {Z}eros of {C}ertain {I}ntegral {F}unctions}, Proc.
  London Math. Soc. (2) \textbf{25} (1926), 283--302. \MR{1575285}

\end{thebibliography}

\end{document}